\title{The Positive Effects of Stochastic Rounding in Numerical Algorithms}
\author{El-Mehdi El Arar$^{1}$, Devan Sohier$^{1}$, Pablo de Oliveira Castro$^{1}$, Eric Petit$^{2}$\\
	\normalsize $^{1}$Universit\'e Paris-Saclay, UVSQ, LI-PaRAD\\
	\normalsize $^{2}$Intel Corp\\
	\normalsize  \{el-mehdi.el-arar, devan.sohier, pablo.oliveira\}@uvsq.fr\\
	\normalsize eric.petit@intel.com
}
\begin{document}

\maketitle

\begin{abstract}
Recently, stochastic rounding (SR) has been implemented in specialized hardware but most current computing nodes do not yet support this rounding mode. Several works empirically illustrate the benefit of stochastic rounding in various fields such as neural networks and ordinary differential equations. 
For some algorithms, such as summation, inner product or matrix-vector multiplication, it has been proved that SR provides probabilistic error bounds better than the traditional deterministic bounds.

In this paper, we extend this theoretical ground for a wider adoption of SR in computer architecture.
First, we analyze the biases of the two SR modes: SR-nearness and SR-up-or-down. We demonstrate on a case-study of Euler's forward method that IEEE-754 default rounding modes and SR-up-or-down accumulate rounding errors across iterations and that SR-nearness, being unbiased, does not. 
Second, we prove a $O(\sqrt{n})$ probabilistic bound on the forward error of Horner's polynomial evaluation method with SR, improving on the known deterministic $O(n)$ bound. 
\end{abstract}

\begin{keywords}
Stochastic rounding, Floating-point arithmetic, Polynomial evaluation, Horner's algorithm, Numerical integration.
\end{keywords}

%
%Sections
%

%Introduction

\section{INTRODUCTION}

In floating-point arithmetic, rounding errors occur because of the finite representation of floating-point numbers in computers. The accumulation of rounding errors can significantly reduce the accuracy of a computation~\cite[p.~8]{parker1997monte}.

Stochastic arithmetic proposed in the 1950s by von Neumann and Goldstine~\cite{Neumann1947NumericalIO} is a computing paradigm developed as a model for exact computation with imprecise data. However, hardware units proposing stochastic rounding (SR) are still unavailable in most computer machines. Various specialized processors have introduced it such as: Graphcore IPUs which supports SR for binary32 and binary16 arithmetic~\cite{graph-a, graph-b, graph-c}, or Intel neuromorphic chip Loihi~\cite{davies2018loihi} to improve the accuracy of biological neuron and synapse models. Moreover, several related patents are owned by major chip designers such as AMD~\cite{amd}, NVIDIA~\cite{nvidia}, IBM ~\cite{ibm1,ibm2}, and other computing companies~\cite{com1,com2,com3}. These developments support the idea of hardware implementations using stochastic rounding becoming more available in the future.

Stochastic arithmetic has two main applications~\cite{survey}. 
First, it can be used to estimate empirically the numerical error of complex programs. In this context, stochastic arithmetic introduces a random noise in each floating-point operation to simulate the effect of rounding errors. To make this error analysis automatic, various tools such as Verificarlo~\cite{verificarlo}, Cadna~\cite{cadna} and Verrou~\cite{verrou} have been developed.
Second, stochastic arithmetic is sometimes used as a replacement for the default deterministic rounding mode in numerical simulations. Indeed, it has been demonstrated that in multiple domains, SR provides positive effects compared to the deterministic IEEE-754~\cite{norm} default rounding mode.

In particular, in the neural networks field, using SR instead of deterministic rounding~\cite{gupta} enables training with smaller data types and better accuracy. 
Indeed, the noise during training has been observed to help regularization and avoid model over-fitting~\cite{ml1,ml2}. 
The positive effect of SR extends also to the calculation of the solution of ordinary differential
equations (ODEs) in low precision~\cite{ode1,ode2} where SR reduces the accumulation of rounding errors by avoiding stagnation phenomenon when the step decreases.
Various other applications such as PDEs, Quantum mechanics, Quantum computing use SR to improve their results~\cite{survey}.

The IEEE-754 norm defines five rounding modes for floating-point arithmetic which are all deterministic: round to nearest ties to even (default), round to nearest ties away, round to zero, round to $+ \infty$, and round to $-\infty$. In section~\ref{sec:back}, we present a floating-point arithmetic background, and, we describe two stochastic rounding modes defined in \cite[p.~34]{parker1997monte}: SR-nearness and SR-up-or-down.
Section~\ref{sec:ODE} presents our first contribution: we study the bias and we compare the two stochastic rounding modes above and the default rounding mode in the IEEE-754 norm (RN-nearest32) on rectangular integration, which is at the basis of Euler's Method for ODE. We show that, contrarily to SR-up-or-down, SR-nearness is unbiased. An exact expression and an estimation of the bias are given for SR-up-or-down. We show how the accumulation of errors with both SR-up-or-down and IEEE-754 modes leads to results significantly less accurate than with SR-nearness.

At the beginning of section~\ref{sec:pbBound}, a probabilistic background, as well as some properties of SR-nearness rounding are presented. We recall some techniques that have been used in order to obtain a probabilistic bound for the inner product on $\sqrt{n}$ instead of the deterministic bound on $n$. We explain two approaches proposed by Higham and Mary~\cite{theo19} and Ilse, Ipsen and Zhou~\cite{ilse}. We extend these techniques for our second contribution and showing that they can still be used in situations where one multiplication operand is affected by an error.
%We prove a probabilistic bound to the relative error of the Horner method for the polynomial evaluation. 
In particular, using SR-nearness and without additional assumption, we provide a probabilistic bound in $O(\sqrt{n})$ rather than the deterministic bound which is in $O(n)$. We conclude this section with numerical experiments comparing the bounds above for the Chebyshev polynomial.

\section{Background}
\label{sec:back}
\subsection{Floating-point arithmetic}\label{sec:FP_def}
A normal floating-point number in such a format is a number $x$ for which there exists a triple $(s, m, e)$ such that $x= \pm m \times \beta^{e-p}$, where $\beta$ is the basis, $e$ is the exponent, $p$ is the working precision, and $m$ is an integer (the significand) such that $\beta^{p-1} \leq m < \beta^p$. This triple is unique.
We only consider normal floating-point numbers; detailed information on the floating-point format most generally in use in current computer systems is defined in the IEEE-754 norm~\cite{norm}.

Let us denote $\mathcal{F}\subset \mathbb{R}$ the set of normal floating-point numbers and $x\in \mathbb{R}$. Upward rounding $\lceil x \rceil $ and downward rounding $\lfloor x \rfloor$ are defined by:
$$ \lceil x \rceil=\min\{y\in \mathcal{F} : y \geq x\},  \quad \lfloor x \rfloor=\max\{y\in \mathcal{F} : y \leq x\},$$
clearly, $\lfloor x \rfloor  \leq x \leq \lceil x \rceil$, with equalities if and only if $x \in \mathcal{F}$.  

The floating-point approximation of a real number $x\ne 0$ is one of $\lfloor x\rfloor$ or $\lceil x\rceil$.
\begin{equation}
    \fl(x) =x(1+\delta), \label{fl(x)} 
\end{equation}
where $\delta = \frac{ \fl(x) - x}{x}$ is the relative error: $\lvert \delta \rvert \leq \beta^{1-p}$.
In the following, we note $u=\beta^{1-p}$. IEEE-754 mode RN (round to nearest, ties to even) has ~the ~stronger ~property\footnote{In many works focusing on IEEE-754 RN, $u$ is chosen to be $\frac12\beta^{1-p}$.} that ~$\lvert \delta \rvert \leq\frac12\beta^{1-p}=\frac12u$. 

For $x, y\in\mathcal F$, the considered rounding modes verify  $\fl(x\op y)\in\{\lfloor x\op y\rfloor, \lceil x\op y\rceil\}$ for $\op\in\{+, -, *, /\}$. Moreover, for IEEE-754 RN~\cite{norm} and stochastic rounding~\cite{theo21stocha} the error in one operation is bounded:
\begin{equation}
     \fl(x \op y) = (x \op y)(1+\delta), \; \lvert \delta \rvert \leq u, \label{fl(xopy)}
 \end{equation}
specifically for RN we have $\lvert \delta \rvert \leq \frac12 u$.

Assume that $x$ is a real that is not representable: $x\in \mathbb{R} \setminus \mathcal{F}$. 
The machine-epsilon or the distance between the two floating-point numbers enclosing $x$ is
$\epsilon(x) = \lceil x \rceil - \lfloor x \rfloor = \beta^{e-p}$. 
The fraction of $\epsilon(x)$ rounded away, as shown ~in ~figure~\ref{fig:theta}, is 
~~$\theta(x) = \frac{x - \lfloor x \rfloor}{\lceil x \rceil - \lfloor x \rfloor }$

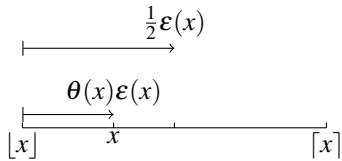
\begin{figure}
\centering
\begin{tikzpicture}[xscale=4]
\draw (0,0) -- (1,0);
\draw[shift={(0,0)},color=black] (0pt,0pt) -- (0pt, 2pt) node[below] {$\lfloor x \rfloor$};
\draw[shift={(1,0)},color=black] (0pt,0pt) -- (0pt, 2pt) node[below] {$\lceil x \rceil$};
\draw[shift={(.3,0)},color=black] (0pt,0pt) -- (0pt, 2pt) node[below] {$x$};
\draw[shift={(.5,0)},color=black] (0pt,0pt) -- (0pt, 2pt);
\draw[|->] (0, 30pt) -- (.5, 30pt) node[above] {$\frac{1}{2}\epsilon(x)$};
\draw[|->] (0, 5pt) -- (.3, 5pt) node[above] {$\theta(x)\epsilon(x)$};
\end{tikzpicture}

    \caption{$\theta(x)$ is the fraction of $\epsilon(x)$ to be rounded away.}
    \label{fig:theta}
\end{figure}

We note $\llfloor x \rrfloor$ the integer part of $x$. The following lemma gives an important property of downward rounding.
\begin{lem}\label{integer part}
Let $x\in \mathbb{R} \setminus \mathcal{F}$. $\beta^{p-e}\lfloor x \rfloor = \llfloor \beta^{p-e} x \rrfloor$.
\end{lem}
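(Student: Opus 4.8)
The plan is to show that $\lfloor x\rfloor$ and $\lceil x\rceil$ are two consecutive integer multiples of the local grid spacing $\beta^{e-p}$, and then simply read off the integer part of $\beta^{p-e}x$ from the sandwich $\lfloor x\rfloor\le x<\lceil x\rceil$.

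First I would establish that $\beta^{p-e}\lfloor x\rfloor$ is an integer. Since $x$ is a normal number with exponent $e$, it lies in the binade $\beta^{e-1}\le\lvert x\rvert<\beta^{e}$; as $\beta^{e-1}\in\mathcal F$ but $x\notin\mathcal F$, for $x>0$ this gives $\beta^{e-1}\le\lfloor x\rfloor\le x<\beta^{e}$ (the case $x<0$ is symmetric, working with $-\lceil\lvert x\rvert\rceil$, etc.). Every floating-point number in this binade has the form $m\beta^{e-p}$ with $m\in\mathbb Z$ and $\beta^{p-1}\le m<\beta^{p}$, so $m:=\beta^{p-e}\lfloor x\rfloor\in\mathbb Z$. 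Using the identity $\epsilon(x)=\lceil x\rceil-\lfloor x\rfloor=\beta^{e-p}$ already recorded in the text, I then get $\lceil x\rceil=(m+1)\beta^{e-p}$.

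Finally, combining $\lfloor x\rfloor\le x<\lceil x\rceil$ — with the right-hand inequality strict because $x\notin\mathcal F$ — and dividing by $\beta^{e-p}$ yields $m\le\beta^{p-e}x<m+1$. Hence $\llfloor\beta^{p-e}x\rrfloor=m=\beta^{p-e}\lfloor x\rfloor$, which is exactly the claim.

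The only genuinely delicate point, and the step I would write most carefully, is the first one: verifying that $\lfloor x\rfloor$ really stays inside the binade $[\beta^{e-1},\beta^{e})$ so that it is an integer multiple of the \emph{same} spacing $\beta^{e-p}$ used in $\epsilon(x)$ (rather than of the coarser spacing of the next binade up), together with the bookkeeping for the sign of $x$ and the harmless boundary case $\lceil x\rceil=\beta^{e}$ (where $m+1=\beta^{p}$). Once that is pinned down, the remainder is a one-line manipulation of inequalities.
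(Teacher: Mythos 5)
Your proof is correct and follows essentially the same route as the paper: scale by $\beta^{p-e}$, use $\lceil x\rceil-\lfloor x\rfloor=\beta^{e-p}$ so the scaled floor and ceiling are consecutive integers, and read off the integer part from the strict sandwich $\lfloor x\rfloor\le x<\lceil x\rceil$. The only difference is cosmetic: the paper simply asserts that $\beta^{p-e}\lfloor x\rfloor$ and $\beta^{p-e}\lceil x\rceil$ are integers, whereas you justify this from the binade structure (and handle sign and the boundary case $\lceil x\rceil=\beta^{e}$), which is a slightly more careful write-up of the same argument.
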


\begin{proof}
We know that $\beta^{p-e}\lfloor x \rfloor,\beta^{p-e} \lceil x \rceil \in \mathbb{Z}$, and
$\lfloor x \rfloor < x < \lceil x \rceil$, then $\beta^{p-e}\lfloor x \rfloor <\beta^{p-e} x < \beta^{p-e} \lceil x \rceil$. We thus have
$$\beta^{p-e}\lfloor x \rfloor \leq \llfloor \beta^{p-e} x \rrfloor < \beta^{p-e} \lceil x \rceil.$$
Since $\lceil x \rceil -\lfloor x \rfloor = \beta^{e-p}$, then  $\beta^{p-e} \lceil x \rceil - \beta^{p-e}  \lfloor x \rfloor =1$ and
$$\beta^{p-e}\lfloor x \rfloor \leq \llfloor \beta^{p-e} x \rrfloor <  \beta^{p-e} \lfloor x \rfloor +1.$$
\end{proof}

\subsection{Stochastic arithmetic}\label{sec:StochaticRound_def}

In order to define the two principal stochastic rounding modes, consider $\widehat x$ the random variable of the distribution of results after random rounding of $x$. Then
$$\widehat x = \textbf{random\_round}(x) = \textbf{round}(x + \beta^{e-p}\xi),$$
where $\xi$ is a random variable that can be discrete or continuous and \textbf{round} is the default IEEE-754 rounding mode to the nearest.
\begin{itemize}
    \item SR-nearness~\cite[p.~34]{parker1997monte} is defined for an $\xi$ uniform random variable on $]-\frac12 ; \frac{1}{2}[$, hence $\xi$ has mean $0$ and standard deviation $1/\sqrt{12}$.
            \begin{figure}
    \centering
\begin{tikzpicture}[xscale=5]
\draw (0,0) -- (1,0);
\draw[shift={(0,0)},color=black] (0pt,0pt) -- (0pt, 2pt) node[below] {$\lfloor x \rfloor$};
\draw[shift={(1,0)},color=black] (0pt,0pt) -- (0pt, 2pt) node[below] {$\lceil x \rceil$};
\draw[shift={(.3,0)},color=black] (0pt,0pt) -- (0pt, 2pt) node[below] {$x$};
\draw (0,0) .. controls (.15,.4) .. (.3,0)  (0.15,0.5) node {$1-\theta(x)$};
\draw (.3,0) .. controls (.65,.7) .. (1,0) (0.65,0.8) node {$\theta(x)$} ;
\end{tikzpicture}
    \caption{\textbf{SR-nearness}.}
\end{figure}
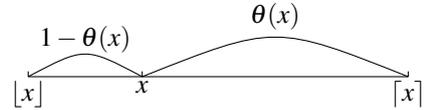

In other words, SR-nearness consists ~in ~rounding ~up ~$x\in \mathbb{R} \setminus \mathcal{F}$ with probability $\theta(x)=(x-\lfloor x \rfloor)/(\lceil x \rceil - \lfloor x \rfloor )$, we round down with probability $1-\theta(x)$, proportional to the distances between $x$ and the closest representable numbers. 

Stott Parker shows that SR-nearness is unbiased \cite[p.~34]{parker1997monte} so $\mathbb{E}(\widehat x) = x$. 
    \item SR-up-or-down is defined for a $\xi$ uniform random variable on $]-\theta(x); 1-\theta(x)[$,  $\xi$ is biased with mean $ \frac{1}{2}-\theta(x)$ and standard deviation $1/\sqrt{12}$.
    \begin{figure}
   \centering
\begin{tikzpicture}[xscale=5]
\draw (0,0) -- (1,0);
\draw[shift={(0,0)},color=black] (0pt,0pt) -- (0pt, 2pt) node[below] {$\lfloor x \rfloor$};
\draw[shift={(1,0)},color=black] (0pt,0pt) -- (0pt, 2pt) node[below] {$\lceil x \rceil$};
\draw[shift={(.3,0)},color=black] (0pt,0pt) -- (0pt, 2pt) node[below] {$x$};
\draw (0,0) .. controls (.15,.4) .. (.3,0) (0.15,0.6) node {$\frac12$};
\draw (.3,0) .. controls (.65,.7) .. (1,0) (0.65,0.85) node {$\frac12$} ;
\end{tikzpicture}
    \caption{\textbf{SR-up-or-down}.}
\end{figure}
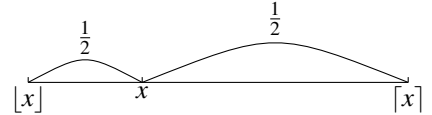

In other words, SR-up-or-down consists in rounding $x$ up or down with probability $\frac12$. 
SR-up-or-down mode can be expressed~\cite[p.~34]{parker1997monte} in terms of $\theta(x)$:
since the two outcomes of SR-up-or-down mode are equiprobable, we have $\mathbb{E}(\widehat x) = \frac{\lceil x \rceil + \lfloor x \rfloor}{2}$, which allow us to write the bias as
$$ \mathbb{E}(\widehat x -x) = \frac{\lceil x \rceil + \lfloor x \rfloor}{2} - x,$$
because $ \theta(x)= \frac{x-\lfloor x \rfloor}{\lceil x \rceil - \lfloor x \rfloor}$
\begin{align*}
    \mathbb{E}(\widehat x-x) &= (\lceil x \rceil - \lfloor x \rfloor)(\frac{1}{2}-\theta(x)) \\
     &= \epsilon(x)(\frac{1}{2}-\theta(x)).
\end{align*} 
Thus, we conclude that SR-up-or-down is biased and the expected value depends on $\theta(x)$ and $\epsilon(x)$.
\end{itemize}

\section{Integrating a constant function}
\label{sec:ODE}
Rectangular integration rule is a classic approximation for performing numerical integration: the area under a curve is approximated by a sum of $N$ rectangle areas:

$$ \int_a^b f(t) dt \approx \sum^{N-1}_{k=0} h f(a+kh)  $$
where $h=\frac{b-a}{N}$.
In particular, rectangular rule is one of the resolution techniques for ODE using Euler’s forward method.

Verrou's tutorial~\cite{verrou-tutorial} integrates the cosinus function with the rectangular rule; with deterministic round to nearest or SR-up-or-down modes, the solution is biased. When the number of integration steps grows, this bias can become high and degrade the quality of the solution.
In this section, we show why deterministic and SR-up-or-down modes are sometimes biased with rectangular rule. 

We perform the analysis on a constant function  $f(t) = 1$ for all $t\in [0;1]$. With $f$ constant, the evaluation error is zero, making it clear how the numerical error accumulates on the summation.

Denote $x=1=\sum_{k=0}^{N-1} h$, where $h=1/N$. The distribution $\widehat x$ is produced by summing $N$ times the integration step $h$.
We note $\widehat s_k$ the random variable for the partial sum at step $0 \leq k \leq N-1$ and $s_k$ the exact expected result,
with $\widehat s_{N-1} = \widehat x$.

\paragraph{SR-up-or-down}: As shown before, for each $\widehat s_k$ we introduce a bias corresponding to
$$
\mathbb{E}(\widehat s_k-s_k) = \epsilon(s_k)(\frac{1}{2}-\theta(s_k)),
$$ 
from the definition of $\theta(s_k)$, we have $0< \theta(s_k)< 1$, then $-\frac12<\frac12 - \theta(s_k) < \frac12$ and

$$
\lvert \mathbb{E}(\widehat s_k-s_k) \rvert < \frac12 \epsilon(s_k).
$$
Table~\ref{tab:skbias} shows these different values for $N=20$.

\begin{table}
    \centering
     \caption{$s_k$, $\theta$, bias and $\epsilon$ for $N=20$.}
    \begin{tabular}{crrrr}
    \toprule
         $k$ & $s_k$ & $\theta(s_k)$ & $\mathbb{E}(S_k-s_k)$ & $\epsilon(s_k)$ \\
    \midrule
2 & 0.150...  & 0.7500 & -3.725290e-09 & 1.490116e-08 \\
3 & 0.200... & 0.2500 &  3.725290e-09 & 1.490116e-08 \\
\\
4 & 0.250...  & 0.6250 & -3.725290e-09 & 2.980232e-08 \\
5 & 0.300...  & 0.6250 & -3.725290e-09 & 2.980232e-08 \\
6 & 0.350...  & 0.6250 & -3.725290e-09 & 2.980232e-08 \\
7 & 0.400...  & 0.6250 & -3.725290e-09 & 2.980232e-08 \\
8 & 0.450...  & 0.6250 & -3.725290e-09 & 2.980232e-08 \\
\\
9 & 0.500...  & 0.3125 &  1.117587e-08 & 5.960464e-08 \\
10 & 0.550... & 0.8125 & -1.862645e-08 & 5.960464e-08 \\
11 & 0.600... & 0.8125 & -1.862645e-08 & 5.960464e-08 \\
12 & 0.650... & 0.8125 & -1.862645e-08 & 5.960464e-08 \\
13 & 0.700... & 0.8125 & -1.862645e-08 & 5.960464e-08 \\
14 & 0.749... & 0.8125 & -1.862645e-08 & 5.960464e-08 \\
15 & 0.799... & 0.8125 & -1.862645e-08 & 5.960464e-08 \\
16 & 0.849... & 0.8125 & -1.862645e-08 & 5.960464e-08 \\
17 & 0.899... & 0.8125 & -1.862645e-08 & 5.960464e-08 \\
18 & 0.949... & 0.8125 & -1.862645e-08 & 5.960464e-08 \\
19 & 0.999... & 0.8125 & -1.862645e-08 & 5.960464e-08 \\
\bottomrule
    \end{tabular}
   
    \label{tab:skbias}
\end{table}

Interestingly, in this table, we note that $\theta(s_k)$ is constant
between two powers-of-the-base except for the first value. For example for $9<k<20$, $s_k$ stays within $[2^{-1}; 2^{0})$ and both $\theta(s_k)$ and $\mathbb{E}(S_k-s_k)$ are constant.  Let us show why that is always the case.

Suppose $s_k \in [\beta^{e}; \beta^{e+1})$. Then $\epsilon(s_k) = \beta^{e-p}$. 
At each step the next partial sum is computed as, $s_{k+1} =\fl(s_k) + h$, in that case, using the lemma \ref{integer part}, we have

\begin{eqnarray*}
\theta(s_{k+1}) &=& \beta^{p-e}( s_{k+1} - \lfloor s_{k+1} \rfloor)\\
&=& \beta^{p-e}s_{k+1} - \llfloor \beta^{p-e}s_{k+1} \rrfloor \\ 
&=& \beta^{p-e}\fl(s_k) + \beta^{p-e}h  - \llfloor \beta^{p-e}\fl(s_k) + \beta^{p-e}h \rrfloor.
\end{eqnarray*}

Since $\fl(s_k) \in \mathcal{F}$, we have $\beta^{p-e}\fl(s_k) \in \mathbb{Z}$ and
$$\llfloor \beta^{p-e}\fl(s_k) + \beta^{p-e}h \rrfloor = \beta^{p-e}\fl(s_k) + \llfloor  \beta^{p-e}h \rrfloor. $$ 
Finally 
$$\theta(s_{k+1}) =  \beta^{p-e}h - \llfloor \beta^{p-e}h \rrfloor= \theta(h).$$ 

Thus $\theta(s_{k+1})$ depends only on $h$ and $e$. Recursively for all $l>0$ satisfying $s_{k+l} \in [\beta^{e}; \beta^{e+1})$, $\theta(s_{k+l}) = \beta^{p-e}h - \llfloor \beta^{p-e}h \rrfloor = \theta(h)$ is constant. The bias
\begin{eqnarray*}
\mathbb{E}(\widehat s_k-s_{k+l}) &=& \epsilon(s_k)(\frac{1}{2}-\theta(s_{k+l})) \\
&=& \beta^{e-p}(\frac{1}{2}- \beta^{p-e}h - \llfloor \beta^{p-e}h \rrfloor),
\end{eqnarray*}
is also constant in this interval.

Between two powers of the base, $\theta$ remain constant as well as the bias. Because the bias is constant (and, consequently, its sign too), it accumulates across iterations.

The total bias can be written as,

$$
\mathbb{E}( \widehat x - x) = E( \widehat s_{N-1} - s_{N-1}) = \epsilon(s_{N-1})(\frac{1}{2} - \theta(s_{N-1})), 
$$
and 
$$
   \lvert \mathbb{E}( \widehat x - x) \rvert < \frac12 \epsilon(s_{N-1}).
$$

We can neglect the effect of the first partial sum in each power-of-the-base interval.
Since $s_k$ increases arithmetically, and the size of the power of the base interval grows geometrically, the bias in the last interval usually dominates since it contains more summation terms and has a larger $\epsilon$. 

\textbf{Numerical experiment.}
We have verified numerically that the above expression for the bias closely predicts the bias measured with SR-up-or-down stochastic rounding. 

We consider a fixed number of iterations N. We ran one time the C program in listing~\ref{lst:constant-fixed} with each of the two previously defined stochastic rounding modes as well as round to nearest. 
Figure~\ref{fig:y=1} plots the three distributions over $N$. 

\begin{lstfloat}
\begin{verbatim}
float h = 1/N;
float s = 0.0;
for (int i=0 ; i < N ; i++) { 
    s += h*1; 
}
return s;
\end{verbatim}
    \caption{Fixed-step rectangle integration of a constant}
    \label{lst:constant-fixed}
\end{lstfloat}

\begin{figure}
    \centering
    \includegraphics[scale=0.75]{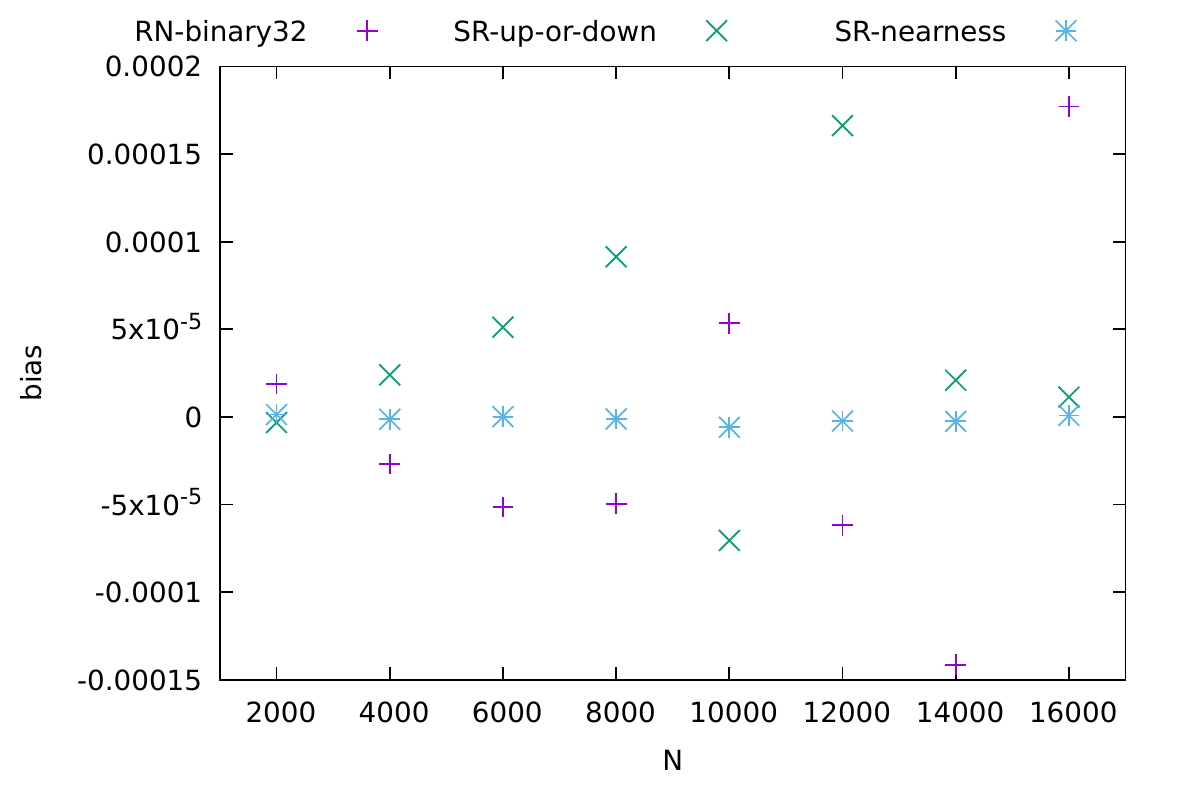}
    \caption{Round to nearest (RN-binary32) vs stochastic rounding SR-up-or-down and SR-nearness for $\int_{0}^{1} 1 \,dt$. }
    \label{fig:y=1}
\end{figure}

Figure~\ref{fig:y=1} shows that SR-nearness mode samples is unbiased regardless of the number $N$ of rectangles. The unbiased nature is unsurprising, since SR-nearness mode can be seen as a sub case of Monte Carlo Arithmetic (MCA). Stott Parker proves~\cite[p.~46]{parker1997monte} that the expectation of a sum of terms with MCA is the exact mathematical result.

On the other hand, SR-up-or-down mode and RN-binary-32 samples have a bias, which confirms the previous results for SR-up-or-down mode. The maximal amplitude of the bias for both SR-up-or-down and increases with $N$ because of errors accumulation. The bias is reproducible and constant across different runs.

In conclusion, this example illustrates that SR-nearness is unbiased not only for one elementary operation, but, even in other numerical methods such as rectangular integration, it is much closer to the expected value than SR-up-or-down or RN-binary32, in particular for $N$ large. 
In the remainder of this paper, we focus on SR-nearness.

\section{Probabilistic bound}
\label{sec:pbBound}
The aim of this section is to introduce a probabilistic bound in $O(\sqrt{n} u)$ on the forward error of Horner algorithm, based on the Azuma–Hoeffding inequality and martingale properties. 
We first recall some probabilistic properties, then provide a review of the results made on the forward error in the numerically computed inner product, and we conclude with numerical experiments illustrating the previous results.

\subsection{Probabilistic Background}
A random variable $Y$ is said to be mean independent of random variable $X$ if and only if its conditional mean ~~$\mathbb{E}[Y/ X=x]$ equals its unconditional mean $\mathbb{E}(Y)$ for all reals $x$ such that the probability that $X = x$ is not zero and we write $\mathbb{E}[Y / X]=\mathbb{E}(Y)$. The random sequence $(X_1, X_2, \ldots)$ is mean independent if $\mathbb{E}[X_k / X_1, . . . , X_{k-1}] = \mathbb{E}(X_k)$ for all $k$.

\begin{pro}
Let $X$ and $Y$ two real random variables: $X$ and $Y$ are independents $\implies X$ is mean independent from $Y$ $\implies X$ and $Y$ are uncorrelated. The reciprocals of these two implications are false.
\end{pro}

Under stochastic rounding, the elementary operations are stochastically rounded, hence, for $x \in \mathbb{R}$, if $\fl(x)=x(1+\delta)$ is obtained by SR-nearness then $\delta$ is a random variable such that $\mathbb{E}(\delta)=0.$ For $x_1, x_2, x_3 \in \mathbb{R}$, such that $c=x_1 \op x_2 \op x_3$ where $\op \in \{+, -, *, /\}$, and 
$$ \fl(c)= \big((x_1 \op x_2)(1+\delta_1) \op x_3\big)(1+\delta_2)$$
obtained from SR-nearness. $\delta_1, \delta_2$ are random variables such that $\mathbb{E}(\delta_1)=\mathbb{E}(\delta_2)=0$.

The following lemma has been proven in~\cite[Lem 5.2]{theo21stocha} and shows that SR-nearness satisfies the property of mean independence.
\begin{lem}\label{meanind}
For some $\delta_1, \delta_2, . . .$, in that order obtained from SR-nearness, the $\delta_k$ are random variables with mean zero such that $\mathbb{E}[\delta_k /  \delta_1, . . . , \delta_{k-1}] = \mathbb{E}(\delta_k)= 0$.
\end{lem}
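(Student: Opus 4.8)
The plan is to realise the $\delta_k$ as the relative errors of a sequence of roundings, each driven by its own independent random draw, and then to obtain the conditional-mean identity from the tower property together with the unbiasedness of a single SR-nearness rounding. Fix the algorithm and its inputs. The $k$-th stochastically rounded operation receives a real $c_k$ — built deterministically from the inputs and from the previously rounded intermediate results — and returns $\widehat{c_k} = \textbf{round}(c_k + \beta^{e_k-p}\xi_k)$, where $(\xi_k)_{k\ge 1}$ is a family of independent variables, each uniform on $]-\frac12;\frac12[$, and $e_k$ is the exponent of $c_k$. Set $\delta_k = (\widehat{c_k}-c_k)/c_k$, with the convention $\delta_k = 0$ when $c_k = 0$. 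Since each $\widehat{c_j}$ is a deterministic function of $c_j$ and $\xi_j$, a straightforward induction shows that $c_k$, and therefore $\delta_1,\ldots,\delta_{k-1}$, is measurable with respect to $\mathcal{G}_{k-1} := \sigma(\xi_1,\ldots,\xi_{k-1})$.

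The computation I would then carry out is the following. By the tower property, and because $\sigma(\delta_1,\ldots,\delta_{k-1}) \subseteq \mathcal{G}_{k-1}$,
\[
\mathbb{E}[\delta_k \mid \delta_1,\ldots,\delta_{k-1}] = \mathbb{E}\bigl(\mathbb{E}[\delta_k \mid \mathcal{G}_{k-1}] \mid \delta_1,\ldots,\delta_{k-1}\bigr).
\]
Conditionally on $\mathcal{G}_{k-1}$ the number $c_k$ is fixed, while $\delta_k$ is a function of $c_k$ and of $\xi_k$ only; as $\xi_k$ is independent of $\mathcal{G}_{k-1}$, we get $\mathbb{E}[\delta_k \mid \mathcal{G}_{k-1}] = g(c_k)$, where $g(c)$ denotes the expected relative error of one SR-nearness rounding of the constant $c$. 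Finally $g(c) = 0$ for every $c$: if $c \in \mathcal{F}$ (in particular $c = 0$) then $\widehat c = c$ and $g(c)=0$ trivially; if $c \notin \mathcal{F}$ then $\widehat c$ equals $\lceil c\rceil$ with probability $\theta(c)$ and $\lfloor c\rfloor$ with probability $1-\theta(c)$, so $\mathbb{E}(\widehat c) = \theta(c)\lceil c\rceil + (1-\theta(c))\lfloor c\rfloor = c$ by the definition of $\theta(c)$, whence $g(c) = \mathbb{E}(\widehat c - c)/c = 0$. Substituting $g(c_k)=0$ yields $\mathbb{E}[\delta_k \mid \delta_1,\ldots,\delta_{k-1}] = 0$, and a further expectation gives $\mathbb{E}(\delta_k) = 0$, which is the assertion.

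The step I expect to be the main obstacle is not the algebra but the probabilistic bookkeeping of the first paragraph: one must commit to a model in which a \emph{fresh} variable $\xi_k$ drives the $k$-th rounding and is independent of the whole past $\mathcal{G}_{k-1}$, and then verify that the previously produced errors $\delta_1,\ldots,\delta_{k-1}$ are indeed $\mathcal{G}_{k-1}$-measurable, so that conditioning on them is coarser than conditioning on $\mathcal{G}_{k-1}$. Once that structure is fixed, everything reduces to the tower property and the elementary identity $\mathbb{E}(\widehat c) = c$ recalled in Section~\ref{sec:StochaticRound_def}. This is precisely the argument of~\cite[Lem.~5.2]{theo21stocha}; the constant-function experiment of Section~\ref{sec:ODE}, where each partial sum is rounded with an independent draw, is a concrete realisation of the same mechanism.
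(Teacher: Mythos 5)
Your proof is correct. Note that the paper does not prove this lemma itself but simply cites \cite[Lem.~5.2]{theo21stocha}; your argument --- modelling the $k$-th rounding with a fresh $\xi_k$ independent of $\mathcal{G}_{k-1}=\sigma(\xi_1,\ldots,\xi_{k-1})$, checking that $c_k$ and hence $\delta_1,\ldots,\delta_{k-1}$ are $\mathcal{G}_{k-1}$-measurable, and combining the tower property with the single-rounding unbiasedness $\mathbb{E}(\widehat c)=c$ of SR-nearness --- is precisely the standard argument behind that cited result, so it matches the intended proof.
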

 %\textcolor{blue}{Note that it is not necessarily that SR-nearness a c $E(\widehat c) = c$ }
Finally, we need to recall the concept of a martingale and the Azuma-Hoeffding inequality for a martingale~\cite{azum}. 

\begin{defn}
\label{def:martingale}
A sequence of random variables $M_1, ... , M_n$ is a martingale with respect to the sequence $X_1, . . . , X_n$ if, for all $k,$
\begin{itemize}
    \item $M_k$ is a function of $X_1, ..., X_k$,
    \item $\mathbb{E}(\lvert M_k \rvert ) < \infty,$ and
    \item $\mathbb{E}[M_k /  X_1, ..., X_{k-1}]=M_{k-1}$.
\end{itemize}

\end{defn}

\begin{lem}(Azuma-Hoeffding inequality). Let $M_0, ..., M_n$ be a martingale ~with ~respect to ~a ~sequence $X_1, . . . , X_n.$ We assume that there exist $a_k<b_k$ such  that  $a_k \leq M_k - M_{k-1} \leq  b_k$ for $k = 1: n.$ Then for
any $A  > 0$ 
$$ \mathbb{P}(\lvert M_n - M_0 \rvert \geq A) \leq 2 \exp \left( 
-\frac{2A^2}{\sum_{k=1}^n(b_k-a_k)^2} 
\right).$$
In the particular case $a_k=-b_k$ we have 
$$ \mathbb{P}\left( \lvert M_n - M_0 \rvert \geq \sqrt{\sum_{k=1}^n b_k^2} \sqrt{2 \ln (2 / \lambda)} \right) \leq \lambda,$$
where $0< \lambda <1$.
\end{lem}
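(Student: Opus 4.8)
The plan is to follow the classical Chernoff-bound argument for martingales. Write $D_k = M_k - M_{k-1}$ for the martingale differences, so that $M_n - M_0 = \sum_{k=1}^n D_k$ and, by the martingale property together with the fact that $M_{k-1}$ is a function of $X_1,\ldots,X_{k-1}$, we have $\mathbb{E}[D_k \mid X_1,\ldots,X_{k-1}] = 0$ with $a_k \leq D_k \leq b_k$. For any $t>0$, Markov's inequality applied to $e^{t(M_n-M_0)}$ gives $\mathbb{P}(M_n - M_0 \geq A) \leq e^{-tA}\,\mathbb{E}[e^{t(M_n-M_0)}]$, so the task reduces to bounding the moment generating function of $\sum_k D_k$.

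First I would establish Hoeffding's lemma: if $Z$ has $\mathbb{E}[Z]=0$ and $a \leq Z \leq b$ almost surely, then $\mathbb{E}[e^{tZ}] \leq \exp\!\big(t^2(b-a)^2/8\big)$ for every $t$. This follows from the convexity bound $e^{tz} \leq \tfrac{b-z}{b-a}e^{ta} + \tfrac{z-a}{b-a}e^{tb}$ valid on $[a,b]$; taking expectations and using $\mathbb{E}[Z]=0$ yields $\mathbb{E}[e^{tZ}] \leq e^{\varphi(t)}$ with $\varphi(t) = \log\!\big(\tfrac{b}{b-a}e^{ta} - \tfrac{a}{b-a}e^{tb}\big)$ (the argument of the logarithm is a convex combination of $e^{ta}$ and $e^{tb}$ since $a \leq 0 \leq b$), and a direct computation gives $\varphi(0)=\varphi'(0)=0$ and $\varphi''(t) \leq (b-a)^2/4$, so Taylor's theorem gives $\varphi(t) \leq t^2(b-a)^2/8$.

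Next I would peel off the differences one at a time by conditioning. Since $e^{t\sum_{k=1}^{n-1}D_k}$ is a function of $X_1,\ldots,X_{n-1}$, the tower property gives $\mathbb{E}[e^{t\sum_{k=1}^n D_k}] = \mathbb{E}\big[e^{t\sum_{k=1}^{n-1}D_k}\,\mathbb{E}[e^{tD_n}\mid X_1,\ldots,X_{n-1}]\big]$, and applying Hoeffding's lemma to the conditional law of $D_n$ (conditional mean $0$, supported in $[a_n,b_n]$) bounds the inner factor by the constant $\exp(t^2(b_n-a_n)^2/8)$. Iterating downward yields $\mathbb{E}[e^{t(M_n-M_0)}] \leq \exp\!\big(\tfrac{t^2}{8}\sum_{k=1}^n (b_k-a_k)^2\big)$. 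Combining with the Chernoff bound and minimizing $-tA + \tfrac{t^2}{8}\sum(b_k-a_k)^2$ over $t$, attained at $t = 4A/\sum(b_k-a_k)^2$, gives $\mathbb{P}(M_n - M_0 \geq A) \leq \exp\!\big(-2A^2/\sum_{k=1}^n(b_k-a_k)^2\big)$. As $(-M_k)$ is also a martingale with differences in $[-b_k,-a_k]$, the same bound holds for $\mathbb{P}(M_0 - M_n \geq A)$, and a union bound gives the two-sided inequality. The corollary is then the substitution $a_k=-b_k$, which makes $\sum(b_k-a_k)^2 = 4\sum b_k^2$, together with the choice $A = \sqrt{\sum_{k=1}^n b_k^2}\,\sqrt{2\ln(2/\lambda)}$, so that the exponent equals $-\ln(2/\lambda)$ and the right-hand side equals $\lambda$.

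The main obstacle is Hoeffding's lemma, specifically the estimate $\varphi''(t) \leq (b-a)^2/4$: this is the one genuinely non-routine step, as it amounts to observing that a random variable supported on $[a,b]$ has variance at most $(b-a)^2/4$, applied here to an exponentially tilted measure. Everything else is bookkeeping — the conditioning step uses only the martingale-difference identity and the tower property, and the final optimization is elementary calculus.
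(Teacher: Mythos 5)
Your proof is correct, and it is the classical argument: Chernoff bound, Hoeffding's lemma applied to the conditional law of each martingale difference via the tower property, optimization in $t$, and a union bound for the two-sided statement; the substitution $a_k=-b_k$, $A=\sqrt{\sum_k b_k^2}\sqrt{2\ln(2/\lambda)}$ indeed turns the exponent into $-\ln(2/\lambda)$ and yields the stated $\lambda$. The paper itself does not prove this lemma — it quotes it from the literature — so there is nothing to contrast with: your write-up supplies the standard proof that the cited reference contains, and the one genuinely delicate point (the bound $\varphi''(t)\leq (b-a)^2/4$ in Hoeffding's lemma, i.e.\ the variance bound for the tilted measure, applied conditionally where the conditional mean-zero property guarantees $a_k\leq 0\leq b_k$) is correctly identified and handled.
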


\subsection{Inner product bound}
Under SR-nearness, the deterministic bound on the error of an inner product $y=a^{\top}b,$ where $a,b\in \mathbb{R}^n$, is proportional to $n u$, where $u$ is the unit roundoff of the floating-point arithmetic in use. Wilkinson~\cite[sec 1.33]{wilk} had the intuition that the roundoff error accumulated in $n$ operations is proportional to $\sqrt{n} u$ rather than $n u$. Based on the mean independence of errors established in Lemma~\ref{meanind}, Higham and Mary~\cite{theo19} and Ilse, Ipsen and Zhou~\cite{ilse} have proved this result for SR. Both works build on the mean independence property of SR. This allows them to form a martingale, and then to apply the Azuma-Hoeffding concentration inequality. The difference between these two works is in the way they form the martingale. In \cite{theo19}, the martingale is built using the errors accumulated in the whole process, while \cite{ilse} forms it by following step-by-step how the error accumulates. In particular, they distinguish between the multiplications and additions computed in the inner product, and carefully monitor their mean independences. In the following, we adapt this construction to Horner's algorithm. 

\subsection{Horner algorithm bound}

Horner algorithm is an efficient way of evaluating polynomials. When performed in floating-point arithmetic this algorithm may suffer from catastrophic cancellations and yield a computed value with less accurate than expected.

In the following, we derive a probabilistic bound for the computed $\fl(P(x))$ based on the previous methods applied for the inner product. 

\begin{mode}\label{mod}
Let $P(x) = \sum_{i=0}^n a_i x^i$, Horner's rule consists in writing this polynomial as 
$$P(x)= \big(((a_nx +a_{n-1})x +a_{n-2})x ... +a_1\big)x +a_0.$$

We define by induction the following sequence 
\begin{eqnarray*}
r_0 &=& a_n, \\
r_{2k-1} &=& r_{2k-2}x, \\
r_{2k} &=& r_{2k-1} +a_{n-k},
\end{eqnarray*}
for all $1 \leq k \leq n$. Likewise, We define 
\begin{eqnarray*}
\widehat r_0 &=& a_n, \\
\widehat r_{2k-1} &=& \widehat r_{2k-2}x (1+\delta_{2k-1}),\\  
\widehat r_{2k} &=& (\widehat r_{2k-1} +a_{n-k})(1+\delta_{2k}),
\end{eqnarray*}
for all $1 \leq k \leq n$, with $\delta_{2k-1}$ and $\delta_{2k}$ represent the rounding errors from the products and the additions, respectively.

\end{mode}

Let us define $Z_i :=\widehat r_i - r_i$ for all $1 \leq i \leq 2n$. The total forward error is $\lvert Z_{2n} \rvert = \lvert \widehat r_{2n} -  r_{2n} \rvert = \lvert \fl(P(x))  - P(x) \rvert$. The computation of $Z_{2n}$ introduces $\delta_1,..., \delta_{2n}$ such that $\lvert \delta_k \rvert \leq u $ for all $1 \leq k \leq 2n$. We prove by induction that the accumulation of the $(1+\delta_k)$ errors on the $a_i x^i$ term for $0 \leq i \leq n-1$ reaches $\prod_{k=2n -2i}^{2n} (1+\delta_k):=\varphi_i$, and for $i=n$, it reaches $\prod_{k=1}^{2n} (1+\delta_k):=\varphi_n$. Hence

\begin{eqnarray*}
\lvert \fl(P(x))  - P(x) \rvert &=&  \left\lvert \sum_{i=0}^n a_i x^{i} (\varphi_i-1)  \right\rvert \leq \sum_{i=0}^n \lvert  a_i x^i\rvert \lvert  \varphi_i-1  \rvert\\
&\leq&  \sum_{i=0}^n \lvert  a_i x^i\rvert  \gamma_{2n},
\end{eqnarray*}
where $\gamma_{2n} = (1+u)^{2n} -1 = 2nu +O(u^2)$ (we recall that, $\forall k$, $|\delta_k|\leq u$).
Finally 

\begin{equation}\label{detbound}
    \frac{\lvert \fl(P(x))  - P(x) \rvert}{\lvert P(x) \rvert} \leq cond_1(P,x) \gamma_{2n} ,
\end{equation}
where  $cond_1(P,x) := \frac{\sum_{i=1}^n \lvert a_i x^i \rvert}{\lvert P(x) \rvert}$ is the condition number of the evaluation of $P$ in $x$ using the 1-norm. The deterministic bound is proportional to $nu$. In the following, we prove a probabilistic bound in $O(\sqrt{n}u)$. 

The partial sums forward errors satisfy
\begin{eqnarray*}
 Z_{2k-1} &=& \widehat r_{2k-1} - r_{2k-1}\\
 &=& \widehat r_{2k-2}x (1+\delta_{2k-1}) - r_{2k-2}x\\
 &=& xZ_{2k-2} +\widehat r_{2k-2}x \delta_{2k-1},  \\
 Z_{2k} &=& \widehat r_{2k} - r_{2k}\\
 &=& (\widehat r_{2k-1} +a_{n-k})(1+\delta_{2k}) - r_{2k-1} - a_{n-k}\\
&=& \widehat r_{2k-1} + (\widehat r_{2k-1} +a_{n-k})\delta_{2k}- r_{2k-1}\\
&=& Z_{2k-1} +(\widehat r_{2k-1} +a_{n-k}) \delta_{2k}, 
\end{eqnarray*}
for all $1 \leq k \leq n$.
The sequence $Z_1,...,Z_{2n}$ does not form a martingale with respect to $\delta_1,...,\delta_{2n}$ because, due to the multiplication in odd steps, $ E[Z_{2k-1}/\delta_1,...,\delta_{2k-2}]= xZ_{2k-2}$. In order to form a martingale and use the Azuma-Hoeffding inequality, we define the following variable change  
$$ Y_i = \frac{Z_i}{x^{\llfloor (i+1)/2 \rrfloor}},$$
where $\llfloor (i+1)/2 \rrfloor$ is the integer part of $(i+1)/2$, we thus have
 \begin{equation}\label{eq1}
        Y_{2k-1} = Y_{2k-2} + \frac{1}{x^{k-1}}\widehat r_{2k-2} \delta_{2k-1},\\    
 \end{equation}
 \begin{equation}\label{eq2}
        Y_{2k}  = Y_{2k-1} + \frac{1}{x^k}(\widehat r_{2k-1} + a_{n-k})\delta_{2k},
 \end{equation}
for all $1 \leq k \leq n.$

\begin{thm}
The sequence of random variables $Y_1,...,Y_{2n}$ is a martingale with respect to $\delta_1, ..., \delta_{2n}$.
\end{thm}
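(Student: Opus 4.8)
The plan is to verify the three defining properties of a martingale from Definition~\ref{def:martingale} for the sequence $Y_1, \dots, Y_{2n}$ with respect to $\delta_1, \dots, \delta_{2n}$, using the recurrences~\eqref{eq1} and~\eqref{eq2} together with the mean independence of SR-nearness errors established in Lemma~\ref{meanind}.

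First I would check the measurability condition: $Y_k$ must be a function of $\delta_1, \dots, \delta_k$. This follows by induction from~\eqref{eq1} and~\eqref{eq2}, once one observes that the coefficients appearing there, namely $\widehat r_{2k-2}$ and $\widehat r_{2k-1} + a_{n-k}$, are themselves functions of $\delta_1, \dots, \delta_{2k-2}$ and $\delta_1, \dots, \delta_{2k-1}$ respectively (immediate from the definition of the $\widehat r_i$ in Model~\ref{mod}), and that $x$, the $a_i$, and $Y_0 = a_n$ are constants. Second, the integrability condition $\mathbb{E}(\lvert Y_k \rvert) < \infty$ holds because each $\delta_j$ is bounded ($\lvert \delta_j \rvert \leq u$), so each $\widehat r_i$ is bounded, hence each $Y_k$ is a bounded random variable.

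The main step is the martingale identity $\mathbb{E}[Y_k \mid \delta_1, \dots, \delta_{k-1}] = Y_{k-1}$. I would treat the odd and even cases separately. For an odd index $k = 2j-1$, equation~\eqref{eq1} gives $Y_{2j-1} = Y_{2j-2} + x^{-(j-1)} \widehat r_{2j-2} \delta_{2j-1}$; conditioning on $\delta_1, \dots, \delta_{2j-2}$, the factor $Y_{2j-2}$ and $x^{-(j-1)}\widehat r_{2j-2}$ are constants (measurable with respect to the conditioning), so by linearity and Lemma~\ref{meanind} the conditional expectation is $Y_{2j-2} + x^{-(j-1)}\widehat r_{2j-2}\, \mathbb{E}[\delta_{2j-1} \mid \delta_1,\dots,\delta_{2j-2}] = Y_{2j-2}$, since $\mathbb{E}[\delta_{2j-1}\mid \delta_1,\dots,\delta_{2j-2}] = 0$. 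The even case $k = 2j$ is analogous, using~\eqref{eq2}: $Y_{2j-1}$ and $x^{-j}(\widehat r_{2j-1} + a_{n-j})$ are functions of $\delta_1, \dots, \delta_{2j-1}$, and $\mathbb{E}[\delta_{2j} \mid \delta_1, \dots, \delta_{2j-1}] = 0$ again by Lemma~\ref{meanind}, yielding $\mathbb{E}[Y_{2j} \mid \delta_1, \dots, \delta_{2j-1}] = Y_{2j-1}$.

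The point requiring the most care — and the reason the variable change to $Y_i = Z_i / x^{\llfloor (i+1)/2 \rrfloor}$ was introduced in the first place — is precisely that the original $Z_i$ fail the martingale identity at odd steps because of the deterministic factor $x$ from the multiplication, so one must confirm that dividing by the right power of $x$ exactly cancels this factor while leaving the error-increment terms with coefficients that are measurable with respect to the previous $\delta$'s. Once the recurrences~\eqref{eq1} and~\eqref{eq2} are in hand this is a short computation, so I do not anticipate a genuine obstacle; the only thing to watch is the implicit assumption $x \neq 0$ (if $x = 0$ the polynomial evaluation is trivial and the statement holds vacuously), which I would note in passing.
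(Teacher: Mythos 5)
Your proposal is correct and follows essentially the same route as the paper: verify the three conditions of Definition~\ref{def:martingale}, with the key step being the odd/even case split where the coefficient of $\delta_k$ in~\eqref{eq1} or~\eqref{eq2} is measurable with respect to the earlier $\delta$'s, so mean independence (Lemma~\ref{meanind}) kills the increment in conditional expectation. (One trivial slip: $Y_0 = Z_0 = \widehat r_0 - r_0 = 0$, not $a_n$; this does not affect the argument.)
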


\begin{proof}
We check that the three conditions of definition~\ref{def:martingale} are satisfied. Throughout ~the ~proof, ~we ~note ~the ~set ~$\mathbb{F}_k= \{\delta_1, ..., \delta_k\}$.

\begin{itemize}
   \item The recursion in Model \ref{mod} shows that $Y_i$ is a function of $\delta_1, ..., \delta_{i}$ for all $1 \leq i \leq 2n$.
   
   \item $\mathbb{E}(\lvert Y_i \rvert ) $ is finite because ~$x$ and $a_k$ are ~~finite ~for ~all ~~$n-i \leq k \leq n$ and $\lvert \delta_j \rvert \leq u$ for all $1 \leq j \leq i$.
    
    \item We prove that $\mathbb{E}[Y_i /\mathbb{F}_{i-1}] = Y_{i-1}$ by distinguishing the even and odd case. \\
    Firstly, using the mean independence of $\delta_1, ... \delta_{2k-1}$ and equation (\ref{eq1}) we obtain
    \begin{eqnarray*}
    \mathbb{E}[Y_{2k-1}/\mathbb{F}_{2k-2}] &=& \mathbb{E}[Y_{2k-2}/\mathbb{F}_{2k-2}] \\
    && + \mathbb{E}[ \frac{1}{x^{k-1}}\widehat r_{2k-2} \delta_{2k-1}/\mathbb{F}_{2k-2}]\\
    &=& Y_{2k-2} 
     + \frac{1}{x^{k-1}}\widehat r_{2k-2} \mathbb{E}[\delta_{2k-1}/\mathbb{F}_{2k-2}]\\
    &=& Y_{2k-2}.
    \end{eqnarray*}
    \end{itemize}
    
Secondly, using the mean independence of $\delta_1, ... \delta_{2k}$ and equation (\ref{eq2}) we obtain
    \begin{eqnarray*}
    \mathbb{E}[Y_{2k}/\mathbb{F}_{2k-1}] &=& \mathbb{E}[Y_{2k-1}/\mathbb{F}_{2k-1}]\\
    && + \mathbb{E}[\frac{1}{x^k}(\widehat r_{2k-1} + a_{n-k})\delta_{2k}/\mathbb{F}_{2k-1}]\\
    &=& Y_{2k-1} \\
    && + \frac{1}{x^k}(\widehat r_{2k-1} + a_{n-k})\mathbb{E}[\delta_{2k}/\mathbb{F}_{2k-1}]\\ 
    &=& Y_{2k-1}.
    \end{eqnarray*}\end{proof}

\begin{lem}\label{azum}
The above martingale $Y_1,..., Y_{2n}$ satisfies 
$$ \lvert Y_i - Y_{i-1} \rvert \leq C_i u, \quad 1\leq i \leq 2n,$$
where 
$$
C_{2k-1} =  \lvert a_n \rvert (1+u)^{2k-2} + \sum_{j=1}^{k-1} \lvert a_{n-j} \rvert \lvert x \rvert^{-j}(1+u)^{2(k-j)-1},
$$
$$
C_{2k} = \lvert a_n \rvert (1+u)^{2k-1} + \sum_{j=1}^k \lvert a_{n-j} \rvert \lvert x \rvert^{-j}(1+u)^{2(k-j)},
$$ 
for all $1\leq k \leq n.$
\end{lem}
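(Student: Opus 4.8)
The plan is to reduce Lemma~\ref{azum} to magnitude bounds on the intermediate Horner iterates $\widehat r_i$ and then to establish those bounds by an induction that alternates between the odd and even indices. From equations~(\ref{eq1}) and~(\ref{eq2}) together with $|\delta_i|\le u$, the martingale increments satisfy
$$|Y_{2k-1}-Y_{2k-2}|\le \frac{|\widehat r_{2k-2}|}{|x|^{k-1}}\,u,\qquad |Y_{2k}-Y_{2k-1}|\le \frac{|\widehat r_{2k-1}+a_{n-k}|}{|x|^{k}}\,u .$$
Hence it suffices to prove, for $1\le k\le n$, the two statements $(A_k)$: $|\widehat r_{2k-2}|/|x|^{k-1}\le C_{2k-1}$ and $(B_k)$: $|\widehat r_{2k-1}+a_{n-k}|/|x|^{k}\le C_{2k}$, since $C_{2k-1}$ and $C_{2k}$ are exactly the coefficients claimed in the lemma.

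I would prove $(A_1)\Rightarrow(B_1)\Rightarrow(A_2)\Rightarrow\cdots\Rightarrow(B_n)$ by a single induction on $k$. The base case $(A_1)$ is immediate, because $\widehat r_0=a_n$ and $C_1=|a_n|$. For the step $(A_k)\Rightarrow(B_k)$, I use the recursion $\widehat r_{2k-1}=\widehat r_{2k-2}\,x\,(1+\delta_{2k-1})$ from Model~\ref{mod} together with the triangle inequality and $|1+\delta_{2k-1}|\le 1+u$ to get
$$\frac{|\widehat r_{2k-1}+a_{n-k}|}{|x|^{k}}\le \frac{|\widehat r_{2k-2}|}{|x|^{k-1}}(1+u)+|a_{n-k}|\,|x|^{-k}\le C_{2k-1}(1+u)+|a_{n-k}|\,|x|^{-k}.$$
For the step $(B_k)\Rightarrow(A_{k+1})$, I use $\widehat r_{2k}=(\widehat r_{2k-1}+a_{n-k})(1+\delta_{2k})$ to obtain $|\widehat r_{2k}|/|x|^{k}\le C_{2k}(1+u)$. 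It then remains only to verify the two algebraic identities $C_{2k-1}(1+u)+|a_{n-k}|\,|x|^{-k}=C_{2k}$ and $C_{2k}(1+u)=C_{2k+1}$, which follow directly by inspecting the exponents of $(1+u)$ in the definitions of $C_{2k-1}$, $C_{2k}$, $C_{2k+1}$: multiplying by $(1+u)$ raises every exponent by one, and the extra term $|a_{n-k}|\,|x|^{-k}$ supplies precisely the missing $j=k$ summand, which carries the exponent $(1+u)^{0}$.

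There is no genuine obstacle here; the only point requiring care is the index bookkeeping, specifically matching the summation range $j=1{:}k-1$ appearing in $C_{2k-1}$ against the range $j=1{:}k$ in $C_{2k}$, the discrepancy being absorbed exactly when the coefficient $a_{n-k}$ first enters at the even step. As an alternative route, one can first expand the iterates in closed form, $\widehat r_{2k-2}=a_nx^{k-1}\prod_{l=1}^{2k-2}(1+\delta_l)+\sum_{j=1}^{k-1}a_{n-j}x^{k-1-j}\prod_{l=2j}^{2k-2}(1+\delta_l)$ and similarly for $\widehat r_{2k-1}$, by a short auxiliary induction, then divide by the relevant power of $x$, take absolute values, and bound each $|1+\delta_l|\le 1+u$ termwise; this yields $C_{2k-1}$ and $C_{2k}$ at once and makes the counts $2(k-j)-1$ and $2(k-j)$ of surviving $(1+u)$ factors transparent.
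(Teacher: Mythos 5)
Your proof is correct and follows essentially the same route as the paper: both reduce the increment bounds to magnitude bounds on the Horner iterates $\widehat r_i$ and establish those by induction on the recursion, replacing each $\lvert 1+\delta_i\rvert$ by $1+u$. Your alternating induction with the identities $C_{2k-1}(1+u)+\lvert a_{n-k}\rvert\,\lvert x\rvert^{-k}=C_{2k}$ and $C_{2k}(1+u)=C_{2k+1}$ is simply a tidier bookkeeping of the paper's direct unrolling of $\lvert \widehat r_{2k-2}\rvert$ (whose displayed closed form in the paper in fact carries a stray extra factor of $\lvert x\rvert$ that your version avoids).
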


\begin{proof}

Note that $Y_0=0$, then $\lvert Y_1 - Y_0 \rvert = \lvert Y_1 \rvert = \lvert a_n \rvert $ and the equality holds for $C_1$. Using equation~(\ref{eq1})
$$
\lvert Y_{2k-1} - Y_{2k-2} \rvert  \leq \frac{1}{\lvert x \rvert^{k-1}} \lvert \widehat r_{2k-2} \rvert u.
$$
However
\begin{eqnarray*}
\lvert \widehat r_{2k-2} \rvert  &\leq &  \lvert \widehat r_{2k-3} \rvert (1+u) + \lvert a_{n-k+1}\rvert (1+u) \\
& \leq & \lvert \widehat r_{2k-4} \rvert \lvert x \rvert (1+u)^2 + \lvert a_{n-k+1}\rvert (1+u),
\end{eqnarray*}
by induction we obtain
$$ \lvert \widehat r_{2k-2} \rvert \leq \lvert a_n \rvert  \lvert x \rvert^k (1+u)^{2k-2} + \sum_{j=1}^{k-1} \lvert a_{n-j} \rvert \lvert x \rvert^{k-j}(1+u)^{2(k-j)-1}.$$
This completes the proof for $C_{2k-1}$ for all $1\leq k \leq n$. A similar approach can be applied to proving the same result for $C_{2k}$ for all $1\leq k \leq n$.
\end{proof}

We now have all the tools to state and then demonstrate the main result of this section:

\begin{thm}
Under SR-nearness, for all $0 < \lambda <1$ and with probability at least $1-\lambda$
\begin{equation}\label{probound}
    \frac{\lvert \fl(P(x))   - P(x) \rvert}{\lvert P(x) \rvert} \leq cond_1(P,x)  \sqrt{u \gamma_{4n}} \sqrt{\ln (2 / \lambda)},
\end{equation}
where $cond_1(P,x) = \frac{\sum_{i=1}^n \lvert a_i x^i \rvert}{\lvert P(x) \rvert}$ is the condition number of the polynomial evaluation and $\gamma_{4n}=(1+u)^{4n} -1$.
\end{thm}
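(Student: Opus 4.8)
The plan is to combine the martingale $Y_1,\dots,Y_{2n}$ from the previous theorem with the increment bounds of Lemma~\ref{azum} through the Azuma–Hoeffding inequality, and then to convert the resulting concentration estimate for $Y_{2n}$ back into a bound on the forward error. First I would record two elementary identities: since $(2n+1)/2$ has integer part $n$, the change of variable $Y_i = Z_i/x^{\llfloor (i+1)/2\rrfloor}$ gives $Y_{2n} = Z_{2n}/x^{n} = \big(\fl(P(x)) - P(x)\big)/x^{n}$; and $Y_0 = 0$ (as already observed in the proof of Lemma~\ref{azum}). Hence $Y_0,Y_1,\dots,Y_{2n}$ is a martingale issued from $0$, and, since $-C_k u \le Y_k - Y_{k-1} \le C_k u$ for $1\le k\le 2n$, the Azuma–Hoeffding inequality in the symmetric case $a_k=-b_k$ with $b_k = C_k u$ gives: for every $0<\lambda<1$, with probability at least $1-\lambda$,
$$\lvert Y_{2n}\rvert \;\le\; \sqrt{\textstyle\sum_{k=1}^{2n}(C_k u)^2}\;\sqrt{2\ln(2/\lambda)} \;=\; u\,\sqrt{\textstyle\sum_{k=1}^{2n} C_k^2}\;\sqrt{2\ln(2/\lambda)}.$$

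The heart of the proof is then the estimate of $\sum_{k=1}^{2n} C_k^2$ from the explicit formulas of Lemma~\ref{azum}. The key remark is that in both expressions for $C_i$ (with $i=2k-1$ or $i=2k$) every power of $(1+u)$ occurring there has exponent at most $i-1$, while the associated coefficients sum to at most $\sum_{j=0}^{n}\lvert a_{n-j}\rvert\,\lvert x\rvert^{-j} = \lvert x\rvert^{-n}\sum_{i=0}^{n}\lvert a_i x^i\rvert = \lvert x\rvert^{-n}\,cond_1(P,x)\,\lvert P(x)\rvert$. Therefore $C_i \le (1+u)^{i-1}\,\lvert x\rvert^{-n}\,cond_1(P,x)\,\lvert P(x)\rvert$ for $1\le i\le 2n$, and summing a geometric series,
$$\sum_{k=1}^{2n} C_k^2 \;\le\; \frac{\big(cond_1(P,x)\,\lvert P(x)\rvert\big)^2}{\lvert x\rvert^{2n}}\sum_{k=1}^{2n}(1+u)^{2(k-1)} \;=\; \frac{\big(cond_1(P,x)\,\lvert P(x)\rvert\big)^2}{\lvert x\rvert^{2n}}\,\frac{(1+u)^{4n}-1}{(1+u)^{2}-1} \;\le\; \frac{\big(cond_1(P,x)\,\lvert P(x)\rvert\big)^2}{\lvert x\rvert^{2n}}\,\frac{\gamma_{4n}}{2u},$$
the last step using $(1+u)^{2}-1 = 2u+u^{2}\ge 2u$.

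Substituting this into the Azuma–Hoeffding bound and multiplying by $\lvert x\rvert^{n}$ to pass from $Y_{2n}$ back to $Z_{2n}=\fl(P(x))-P(x)$, the constants collapse, $u\sqrt{\gamma_{4n}/(2u)}\,\sqrt{2\ln(2/\lambda)} = \sqrt{u\gamma_{4n}}\,\sqrt{\ln(2/\lambda)}$, so that with probability at least $1-\lambda$ one gets $\lvert \fl(P(x))-P(x)\rvert \le cond_1(P,x)\,\lvert P(x)\rvert\,\sqrt{u\gamma_{4n}}\,\sqrt{\ln(2/\lambda)}$; dividing by $\lvert P(x)\rvert\neq 0$ yields~(\ref{probound}). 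I expect the only genuinely delicate point to be the estimate of $\sum_k C_k^2$: one must check carefully that the $(1+u)$-exponents in Lemma~\ref{azum} never exceed $i-1$, since this is exactly what makes the geometric sum telescope to $\gamma_{4n}/((1+u)^2-1)$ and produce the clean factor $\sqrt{u\gamma_{4n}}$ rather than a slightly larger $\gamma$. The martingale structure, the rescaling by $x^{n}$, and the harmless hypotheses $x\neq 0$ and $P(x)\neq 0$ are routine.
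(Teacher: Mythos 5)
Your proposal is correct and follows essentially the same route as the paper: the same martingale and Lemma~\ref{azum}, the symmetric Azuma--Hoeffding inequality applied to $Y_{2n}$ with $Y_0=0$, a uniform bound on the increments by $(1+u)^{i-1}$ times $\lvert x\rvert^{-n}\sum_{j}\lvert a_j x^j\rvert$, the geometric sum giving $\frac{(1+u)^{4n}-1}{(1+u)^2-1}$, and the rescaling by $\lvert x\rvert^{n}$ to recover $Z_{2n}$. The only cosmetic differences are that the paper multiplies by $\lvert x\rvert^{n}$ before bounding the even and odd $C_i$ separately and keeps $(1+u)^2-1=u^2+2u$ exactly (absorbing the slack via $\sqrt{2/(2+u)}\le 1$ at the end), whereas you bound $C_i$ uniformly and drop the $u^2$ term via $(1+u)^2-1\ge 2u$ earlier; both yield exactly the stated bound.
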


\begin{proof}
Recall that $\lvert \widehat r_{2n} - r_{2n} \rvert = \lvert Z_{2n} \rvert = \lvert x^n \rvert \lvert Y_{2n} \rvert$. Therefore, $ Y_1,...,Y_{2n}$ is a martingale with respect to $\delta_1, ..., \delta_{2n}$ and Lemma \ref{azum} implies $ \lvert Y_i - Y_{i-1} \rvert \leq C_i u$ for all $1\leq i \leq 2n$. Using the Azuma-Hoeffding inequality yields
$$\mathbb{P}\left( \lvert Y_{2n} \rvert \leq u \sqrt{\sum_{i=1}^{2n}C_i^2}\sqrt{2 \ln (2 / \lambda)}\right) \geq 1-\lambda , $$
it follows that
$$ \lvert Z_{2n} \rvert \leq u \sqrt{\sum_{i=1}^{2n} (\lvert x \rvert^n C_i)^2}\sqrt{2 \ln (2 / \lambda)}, $$
with probability at least $1-\lambda$, where
\begin{eqnarray*}
\lvert x \rvert^n C_{2k} &=& \lvert a_n \rvert \lvert x \rvert^n (1+u)^{2k-1} + \sum_{j=1}^k \lvert a_{n-j} x^{n-j} \rvert(1+u)^{2(k-j)}\\
&\leq & (1+u)^{2k-1} \sum_{j=0}^k \lvert a_{n-j} x^{n-j} \rvert\\
&\leq& (1+u)^{2k-1} \sum_{j=0}^n \lvert a_{j} x^j \rvert,
\end{eqnarray*}
for all $1\leq k \leq n.$
Hence, $$ (\lvert x \rvert^n C_{2k})^2 \leq (1+u)^{2(2k-1)} \big(\sum_{j=0}^n \lvert a_{j} x^j \rvert \big)^2.$$
In a similar way 
$$ (\lvert x \rvert^n C_{2k-1})^2 \leq (1+u)^{2(2k-2)} \big(\sum_{j=0}^n \lvert a_{j} x^j \rvert \big)^2.$$
Thus 
\begin{eqnarray*}
\sum_{i=1}^{2n} (\lvert x \rvert^n C_i)^2 &\leq&  \big(\sum_{j=0}^n \lvert a_{j} x^j \rvert \big)^2 \sum_{i=0}^{2n-1} ((1+u)^{2})^i\\
&=& \big(\sum_{j=0}^n \lvert a_{j} x^j \rvert \big)^2 \frac{((1+u)^2)^{2n}-1}{(1+u)^2-1}\\
&=&  \big(\sum_{j=0}^n \lvert a_{j} x^j \rvert \big)^2  \frac{\gamma_{4n}}{u^2+2u}.
\end{eqnarray*}
As a result 
$$ \lvert \fl(P(x))  - P(x) \rvert = \lvert Z_{2n} \rvert \leq \sum_{j=0}^n \lvert a_{j} x^j \rvert  \sqrt{\frac{u \gamma_{4n}}{2+u}}  \sqrt{2 \ln (2 / \lambda)},$$
with probability at least $1-\lambda$.
Finally
$$ \frac{\lvert \fl(P(x))  - P(x) \rvert}{\lvert P(x) \rvert} \leq cond_1(P,x) \sqrt{u \gamma_{4n}} \sqrt{\ln (2 / \lambda)},$$
with probability at least $1-\lambda$.
\end{proof}

\begin{rem}
The bounds (\ref{detbound}) and (\ref{probound}) have the same condition number, but differ in another factor: $\gamma_{2n}$ for (\ref{detbound}) against $ \sqrt{u \gamma_{4n}} \sqrt{\ln (2 / \lambda)}$ for (\ref{probound}).

For ~$n$ ~such that ~~$2nu < 1$,~\cite[Lemma 3.1]{higham2002} ~~implies ~~$\gamma_{2n} \leq \frac{2nu}{1-2nu},$ it follows that for $4nu<1$
\begin{eqnarray*}
\sqrt{u \gamma_{4n}} &\leq& \sqrt{ \frac{4nu^2}{1-4nu}}
= u \sqrt{n} \frac{2}{\sqrt{1-4nu}}.
\end{eqnarray*}
For $n$ large, Taylor's formula implies $\gamma_n = nu + O(u^2)$ and
$\gamma_{2n} \approx 2nu$.
This approach can't be used for $\sqrt{\gamma_{4n}}$ because it's indeterminate in $0$. However, 

$$\lim_{u \to 0} \frac{ \sqrt{u \gamma_{4n}} }{\sqrt{n} u} =2 \Longleftrightarrow \sqrt{u \gamma_{4n}} \approx 2\sqrt{n} u.$$
Eventually, the probabilistic bound for the forward error of Horner's algorithm is in $O(\sqrt{n} u)$. 
\end{rem}

\subsection{Numerical experiments.}
In this section, we illustrate that the probabilistic bound is tighter than the deterministic bound for SR-nearness forward error on a numerical application: the evaluation of the ~Chebyshev ~polynomial. 
%~that ~is used in ~many ~areas ~of ~applied ~mathematics, ~science, ~and ~engineering.
We use Horner’s method to evaluate the polynomial $P(x)=T_{N}(x) = \sum_{i=0}^{\llfloor \frac{N}{2} \rrfloor} a_i (x^2)^i$ where $T_{N}$ is the Chebyshev polynomial of degree $N$. Consider an even $N=2n$. We use single-precision (binary32) for both SR-nearness and round to nearest ties to even. All SR computations are repeated $30$ times with verificarlo~\cite{verificarlo}; we plot all samples and the forward error of the average of the 30 SR instances. The following error bounds and evaluations apply:

\begin{eqnarray*}
 \text{Probabilistic bound}    &=& cond_1(P,x) \sqrt{u \gamma_{4n}} \sqrt{\ln (2 / \lambda)}, \\
  \text{Deterministic bound}   &=& cond_1(P,x) \gamma_{2n},\\
  \text{SR-nearness} &=& \frac{\lvert \fl(P(x))  - P(x) \rvert}{\lvert P(x) \rvert}.
\end{eqnarray*}
\begin{figure}
    \centering
    \includegraphics[scale=0.6]{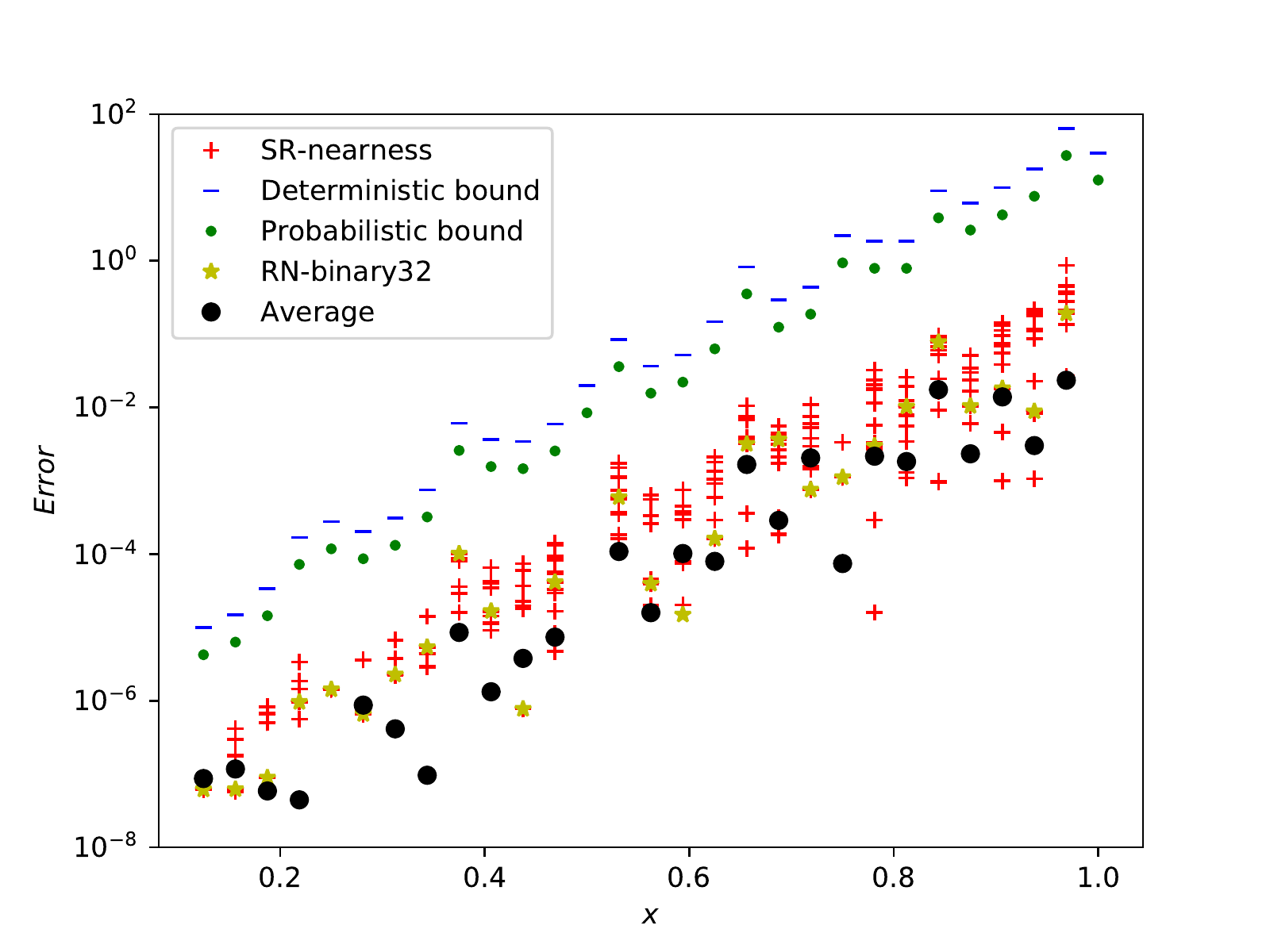}
    \caption{Probabilistic bound with $\lambda =0.5$ vs deterministic bound of the computed forward errors of Horner's rule for Chebyshev polynomial $T_{20}(x)$.}
    \label{fig:n=20}
\end{figure}

Chebyshev polynomial is ill-conditioned near $1$: in figure \ref{fig:n=20}, we evaluate $T_{20}(x)$ for $x\in[\frac{8}{64};1]$. As expected and due to catastrophic cancellations among the coefficients, the condition number increases from $10^0$ to $10^7$ which explains the increase of numerical error from $10^{-7}$ to $10^{-1}$. The probabilistic bound is closer to the forward error points than the deterministic bound even for a small $n=10$. The average of SR-nearness stays below RN-binary32 for almost all points.

In Figure \ref{fig:x=24/26}, the two previous bounds and the forward error are divided by the condition number $cond(P,x),$ and the evaluation in  $x=24/26\approx0.923$ is plotted for various polynomial degrees $N.$

\begin{figure}
    \centering
    \includegraphics[scale=0.55]{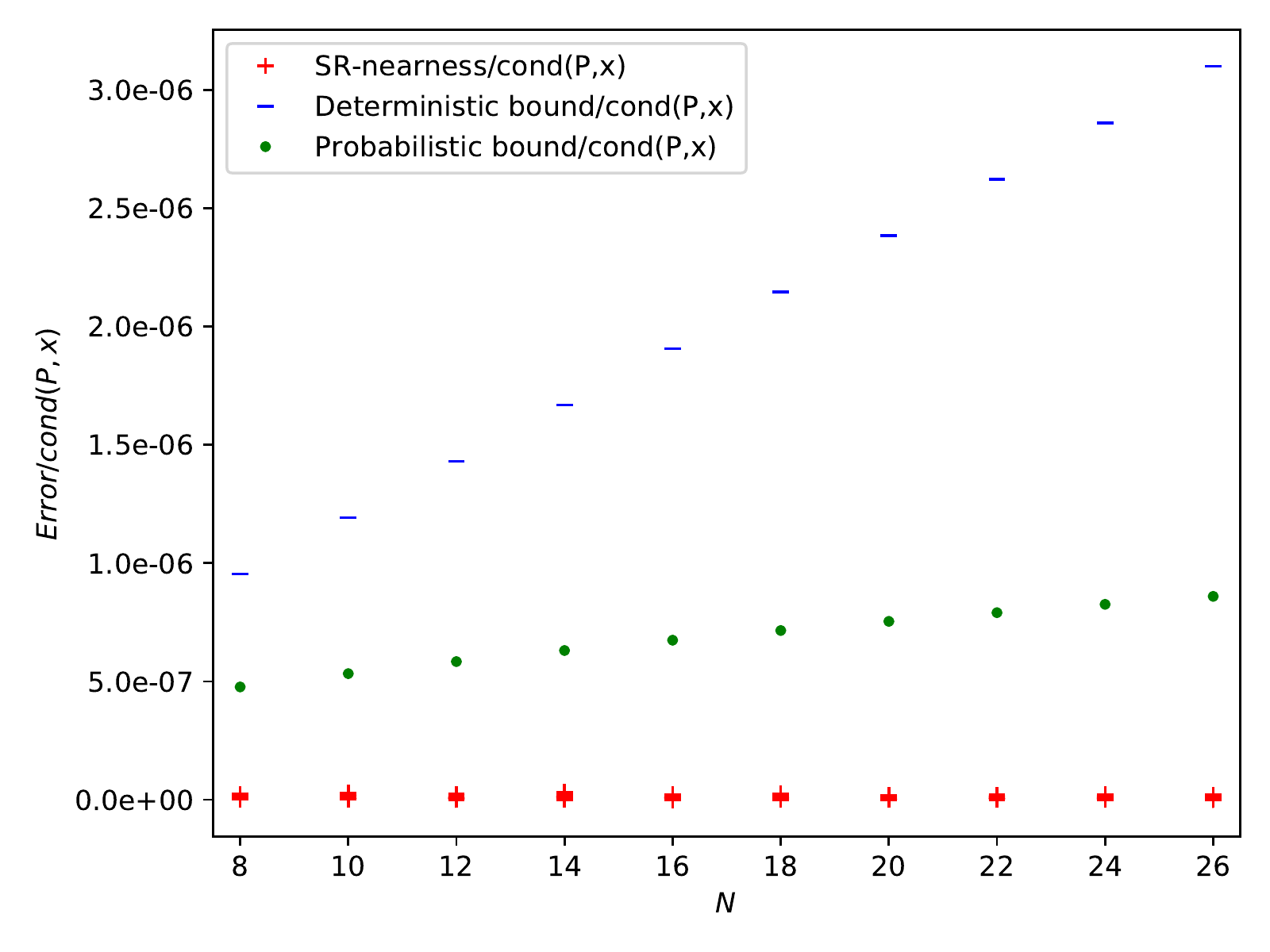}
    \caption{Forward errors/cond(P,x) of Horner's rule for Chebyshev polynomial $T_{N}(24/26)$.} 
    \label{fig:x=24/26}
\end{figure}

The comparison between the two bounds is fairly visible in figure \ref{fig:x=24/26}. By increasing  $N$, the deterministic bound draws away from the forward error faster than the probabilistic bound and for $N$ large, the gap becomes more interesting. SR-nearness points are between $10^{-8}$ and $10^{-10}$, versus $10^{-7}$ for the probabilistic bound and $10^{-6}$ for the deterministic bound. 

\section{Conclusion.}
Stochastic rounding has drawn a lot of attention in various domains~\cite{gupta,ode1,ode2,survey} due to its efficiency compared to the default rounding mode. The fact that SR-nearness satisfies mean independence (a weaker property than independence) leads to an expected value that coincides with the exact value. We have shown that the bias in SR-up-or-down can significantly reduce the precision of the computation, even on simple algorithms such as rectangular integration section~\ref{sec:ODE}, and that SR-nearness can remain unbiased and provide the full expected precision on them.
We also discussed that using SR-nearness leads to having a probabilistic bound in $O(\sqrt{n}u)$, compared to the $O(nu)$ deterministic bound for the inner product forward error. We have shown this property for Horner’s method using Azuma–Hoeffding inequality and martingale properties. As opposed to the study made for the inner product, the issue of this algorithm is that the martingale does not appear explicitly; nevertheless, a change of variable shows that it is present, allowing the use of concentration inequalities. As future work we will investigate more complex algorithms with non-explicit martingales.

\section*{Acknowledgment}
This research was supported by the French National Agency for Research (ANR) via the InterFLOP project (No. ANR-20-CE46-0009).

\bibliographystyle{bib_sobraep}
%\bibliography{ref_sobraep}

\bibliography{ref}

\end{document}